\documentclass[11pt, leqno]{amsart}

\usepackage{stmaryrd,graphicx}
\usepackage{amssymb,mathrsfs,amsmath,amscd,amsthm,color, amsfonts}

\usepackage{float}
\usepackage{mathabx,mathdots,textcomp}
\usepackage[all,cmtip]{xy}
\DeclareMathAlphabet{\mathpzc}{OT1}{pzc}{m}{it}
\usepackage{amsfonts,latexsym,wasysym}
\usepackage[normalem]{ulem}
\usepackage[latin1]{inputenc}
\usepackage{cleveref}

\newtheorem{theorem}{Theorem}[section]
\newtheorem{lemma}[theorem]{Lemma}
\newtheorem*{maintheorem*}{Main Theorem}

\newtheorem{corollary}[theorem]{Corollary}
\theoremstyle{definition}\newtheorem{definition}[theorem]{Definition}

\newtheorem{question}[theorem]{Question}
\newtheorem{remark}[theorem]{Remark}

\newcommand{\s}{\operatorname{sup}}

\newcommand {\diam}{\operatorname{diam}}
\newcommand {\conv}{\operatorname{conv}}
\newcommand{\den}{\simeq_{\mathbb R}}

\begin{document}

\title[Tree-like]{Tree-like is not a transitive relation on paths}

\author[J. Brazas]{Jeremy Brazas}
\address{West Chester University\\ Department of Mathematics\\
West Chester, PA 19383, USA}
\email{jbrazas@wcupa.edu}

\author[G.R. Conner]{Gregory R. Conner}
\address{Brigham Young University\\ Department of Mathematics \\
Provo, UT 84602, USA}
\email{conner@math.byu.edu}

\author[P. Fabel]{Paul Fabel}
\address{Mississippi State University\\ Department of Mathematics and Statistics\\
Mississippi State, MS 39762, USA}
\email{fabel@math.msstate.edu}

\author[C. Kent]{Curtis Kent}
\address{Brigham Young University\\ Department of Mathematics \\
Provo, UT 84602, USA}
\email{curtkent@mathematics.byu.edu}

\subjclass[2010]{ 54F50, 55R65, 57M10 }
\keywords{$\mathbb R$-tree, geodesic $\mathbb R$-tree reduction, path-homotopy, dendrite, unique path lifting, covering map}
\date{\today}

\begin{abstract}
The notions of \emph{tree-like loop} and \emph{Lipschitz tree-like loop} were introduced by Hambly and Lyons in their 2010 Annals of Mathematics paper. They showed that the Lipschitz tree-like property determines an equivalence relation on the set of paths of bounded variation in a given metric space and then asked if this notion could be extended to paths without the Lipschitz requirement. We show that after eliminating the Lipschitz requirement, the resulting relation is no longer transitive and thus is not an equivalence relation. The counterexample is obtained by analyzing an explicit fractal construction in the plane.
\end{abstract}

\begingroup
\def\uppercasenonmath#1{} 
\let\MakeUppercase\relax 
\maketitle
\endgroup

\section{Introduction}

In their 2010 Annals of Mathematics paper, Hambly and Lyons considered a notion of a path  with bounded variation being \emph{tree-like} \cite{HamblyLyons2010}.  Their definition encoded $\mathbb R$-trees using positive continuous functions to the real line, called height functions.

\begin{definition}[\cite{HamblyLyons2010}]
    A continuous function $\alpha:[0,1]\to X$ in a metric space $(X,d)$ is \emph{tree-like} if there exists a positive real-valued continuous function $h:[0,1]\to\mathbb R$  such that, for all $s\leq t$, \[d\big(\alpha(t), \alpha(s)\big) \leq h(t)+h(s)-2\inf\limits_{u\in[s,t]} h(u).\]

    The function $h$ is a \emph{height function} for $\alpha$. Two paths $\alpha,\beta:[0,1]\to X$ are \emph{tree-like equivalent}, if $\alpha*\overline\beta$ is tree-like.  We say $\alpha$ is a \emph{Lipschitz tree-like} path if $\alpha$ has a height function with bounded variation.  Two paths $\alpha,\beta:[0,1]\to X$ are \emph{Lipschitz tree-like equivalent}, if $\alpha*\overline\beta$ is Lipschitz tree-like.
\end{definition}

Hambly and Lyons showed that \emph{Lipschitz tree-like} is an equivalence relation on the set of paths with bounded variation and that each equivalence class has a unique (up to reparameterization) representative of shortest length.  They then showed that, for paths of bounded variation in $\mathbb R^d$, the signature of a path is trivial if and only if the path is Lipschitz tree-like \cite[Theorem 4]{HamblyLyons2010}.  Hambly and Lyons asked how important was the finite length condition and posed the following two questions.


\begin{question}\cite[Problem 1.8]{HamblyLyons2010}
    Does tree-like equivalent (without requiring the height function to be Lipschitz) define an equivalence relation on the set of continuous paths?
\end{question}

\begin{question}\cite[Problem 1.9]{HamblyLyons2010}
    Is there a unique tree reduced path associated to any continuous path?
\end{question}

The main result of this paper, Theorem \ref{finalthm}, gives a negative answer to both of these questions. To prove this theorem, we require an alternative definition of ``tree-like" that was introduced by  Boedihardjo, Geng, Lyons, and Yang in \cite{BoedihardjoGengLyonsYang2016} and which applies to paths that do not have bounded variation. The same definition was considered independently in \cite{BrazasConnerFabelKent_preprint} where it was called ``$\mathbb R$-tree nullhomotopic." To clarify the distinction for general paths, we will use this latter term (see Definition \ref{defn: R-tree nullhomotopic}). Boedihardjo et al.~ were able to show that weakly geometric rough paths have trivial signature if and only if the path is $\mathbb R$-tree nullhomotopic.


\begin{definition}[\cite{BoedihardjoGengLyonsYang2016, BrazasConnerFabelKent_preprint}]\label{defn: R-tree nullhomotopic}
    We say that $\alpha: [0,1]\to X$ is \emph{$\mathbb R$-tree nullhomotopic}, if $\alpha$ factors through a loop in an $\mathbb R$-tree, that is, if there exists an $\mathbb{R}$-tree $E$, a loop $\pi:[0,1]\to E$ and a map $g:E\to X$ such that $g\circ\pi=\alpha$.

    We say that paths $\alpha,\beta:[0,1]\to X$ are \emph{$\mathbb R$-tree homotopic}, and we write $\alpha\den\beta$, if $\alpha*\overline{\beta}$ is $\mathbb R$-tree nullhomotopic.
\end{definition}

A principle result of \cite{BrazasConnerFabelKent_preprint} (Theorem 1.4) is that $\den$ is not an equivalence relation on the space of paths, specifically $\den$ does not satisfy transitivity.  However, this in and of itself does not answer the questions of Hambly and Lyons, as it was not demonstrated that the paths causing non-transitivity admit a height function.  While it  is possible to work through the construction there to show that constructed paths admit height functions, here we will present an alternative geometric construction where it is immediate that the constructed loops admit height functions.  This will also provide an independent proof of Theorem 1.4 of \cite{BrazasConnerFabelKent_preprint}.

The rest of the paper is structured as follows. In Section \ref{sectionrelationbwtypes}, we show that every tree-like loop is $\mathbb{R}$-tree nullhomotopic and that the converse holds when the factorization through the $\mathbb{R}$-tree $E$ can be achieved with a Lipschitz map $g:E\to X$. In Section \ref{sectionbuildingdendrites}, we show how to modify certain piecewise-linear paths that factor through a simplicial tree.   In Section \ref{sec: construction}, we inductively apply the construction from Section \ref{sectionbuildingdendrites} to build two sequences of paths, $g_n\circ \pi_n$ and $\tilde g_n\circ\tilde \pi_n$ that factor through successively larger simplicial trees. In Section \ref{sectionlimiting}, we show that these two sequences of paths limit on the same path $\gamma$, which is space-filling in the convex hull of a right triangle\footnote{After constructing $\gamma$ independently, the authors recognized that $\gamma$ is a parameterization of the Sierpsi\'nski Curve and that our approximation of it is essentially Knopp's representation \cite[Section 4.2]{Sagan1994}}. We also identify general metric conditions ensuring that constructions such as that given in Section \ref{sec: construction} result in $\mathbb{R}$-tree homotopic paths. In Section \ref{sec: the bow on top}, we give a negative answer to Hambly and Lyons' first question by showing that $\alpha=g_1\circ\pi_1$ is tree-like equivalent to $\gamma$, $\beta=\tilde g_1\circ\tilde \pi_1$ is tree-like equivalent to $\gamma$, but that $\alpha$ is not tree-like equivalent to $\beta$. Since the paths $\alpha$ and $\beta$ are rectifiable arcs which only intersect at their ends, they will be reduced and hence give a negative answer to Hambly and Lyons' second question.

\section{Relation between types of tree-like}\label{sectionrelationbwtypes}
We will first give a short proof that tree-like implies $\mathbb R$-tree nullhomotopic. The interested reader can also see \cite{Duquesne2006,HamblyLyonsNotes,  LeGall1991} for additional proofs. Recall that a topological space underlying a compact $\mathbb{R}$-tree is called a \textit{dendrite}. Equivalently, a dendrite is a uniquely arcwise connected Peano continuum.

\begin{definition}[Dendrites from height functions]
    For any continuous function $h:[0,1]\to\mathbb R$, we can define an equivalence relation $\sim_h$ on $[0,1]$ by $s\sim_h t$, if $h(s)= h(t)=a$ and $s,t$ are contained in a single component of $h^{-1}\big([a,\infty)\big)$.  Note, for  $s<t$, this is equivalent to $a= h(s) = h(t) = \inf\limits_{u\in[s,t]} h(u)$.  Then, for any $t\in [0,1]$, the  equivalence class of $t$ is $[t]_h = C\cap h^{-1}\big(h(t)\big)$ where $C$ is the component of  $h^{-1}\big([h(t), \infty) \big)$ containing $t$.      Let $\pi: [0,1]\to E$ be the decomposition map where $E = [0,1]/\hspace{-.25em}\sim_h$. 
\end{definition}
     The following two properties are an immediate consequence of the definition.

    \begin{enumerate}
        
    \item[(*)]A component $C$ of $h^{-1}\big([a,\infty)\big)$ is closed and $\pi$-saturated.  Thus $[0,1]\backslash C$ is open and $\pi$-saturated.
    \end{enumerate}  
    
     \begin{enumerate}
        \item[(**)]For every component $C$ of $h^{-1}\big([a,\infty)\big)$, the components of $C\backslash h^{-1}(a)$ are open and $\pi$-saturated.
    \end{enumerate}

\begin{lemma}\label{lem: build denrite}
    The equivalence relation $\sim_h$ induces an upper semi-continuous decomposition of $[0,1]$ such that the quotient $\pi: [0,1]\to E=[0,1]/\hspace{-.25em}\sim_h$ is a dendrite.  
\end{lemma}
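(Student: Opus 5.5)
Throughout, $h$ is continuous on $[0,1]$ and the classes of $\sim_h$ are the sets $[t]_h=C\cap h^{-1}(h(t))$ described in the definition.

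\emph{Step 1: the decomposition is upper semi-continuous.} Each class $[t]_h$ is closed, since $C$ is a closed interval and $h^{-1}(h(t))$ is closed. The main work is to show that for each class $K=[t]_h$ and each open $U\supseteq K$ there is an open $\pi$-saturated $V$ with $K\subseteq V\subseteq U$. Write $a=h(t)$, let $C=[c,d]$ be the component of $h^{-1}([a,\infty))$ containing $t$, and set $K=C\cap h^{-1}(a)$.

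The neighbourhood $V$ is assembled from two kinds of saturated pieces.
\begin{enumerate}
\item[(i)] For $\epsilon>0$, let $D_\epsilon$ be the component of $h^{-1}([a-\epsilon,\infty))$ containing $C$. The relative interior of $D_\epsilon$ (together with the endpoints $0$ or $1$ where relevant) is open. It is $\pi$-saturated, by the same reasoning as in (*) and (**).
\item[(ii)] The components of $C\setminus h^{-1}(a)$ are open and saturated by (**). Since $K$ is compact and $U$ is open, all but finitely many of these components lie within a small distance of $K$, and so can be absorbed into $U$.
\end{enumerate}
By continuity of $h$, the intervals $D_\epsilon$ shrink to $C$ as $\epsilon\to 0$, because $h<a$ just outside $[c,d]$ or else $c=0$, $d=1$. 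Given $U$, we therefore choose $V$ as follows:
\begin{itemize}
\item take $D_\epsilon$ with $\epsilon$ small, so that $D_\epsilon\setminus C\subseteq U$;
\item remove from it the finitely many closed saturated subintervals of $C$ (closures of excursions above level $a$) that leave $U$.
\end{itemize}
Those excursion intervals meet $K$ only at their endpoints. I therefore expect instead to remove the sets $C'\cap h^{-1}([a+\delta,\infty))$ for the finitely many such excursions $C'$ with suitable small $\delta$, which are closed and saturated by (*). This bookkeeping is the main obstacle: one must check that saturation survives each step, and that points of $K$ remain interior.

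\emph{Step 2: $E$ is a Peano continuum.} Upper semi-continuity gives that $E$ is compact metrizable. It is the continuous image of $[0,1]$, so by Hahn--Mazurkiewicz it is a Peano continuum.

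\emph{Step 3: $E$ is uniquely arcwise connected.} The cleanest route is to use a metric. Define
\[d_h(s,t)=h(s)+h(t)-2\inf_{[s,t]}h.\]
This is a pseudometric on $[0,1]$ whose zero set is exactly $\sim_h$, and it satisfies the four-point condition. Hence $([0,1]/\!\sim_h,d_h)$ is a $0$-hyperbolic space. It is geodesic, since $h$ is continuous, which gives explicit geodesics by taking the first or last passage of levels. So it is an $\mathbb R$-tree.

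The identity from the quotient topology to the $d_h$-topology is continuous, because $d_h$ is continuous on $[0,1]^2$ and constant on classes. Since $E$ is compact and the target is Hausdorff, this map is a homeomorphism. Therefore $E$ is a compact $\mathbb R$-tree, i.e.\ a dendrite.

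With this route, Step 1 can even be deduced rather than proved directly: the decomposition is the fibre decomposition of the continuous map $[0,1]\to([0,1]/\!\sim_h,d_h)$, and closed maps from compact spaces have upper semi-continuous decompositions. I would present this as the main argument, keeping the direct (*)/(**) argument as motivation.
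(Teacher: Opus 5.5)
Your main argument (Step 3, with upper semi-continuity deduced from closedness of $[0,1]\to(E,d_h)$) is correct, but it takes a genuinely different route from the paper.

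\textbf{The paper's route.} The paper never introduces a metric. It uses Nadler's characterization: a continuum in which any two points are separated by a third point is a dendrite. It then constructs the separating point $\pi(t)$ by hand in two cases, according to whether the superlevel components $C_r$, $C_s$ are disjoint or nested. Properties (*) and (**) show that the pieces of $E\setminus\{\pi(t)\}$ are open. That separation also implicitly gives Hausdorffness, hence upper semi-continuity and that $E$ is a continuum.

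\textbf{Your route.} You recognize $\sim_h$ as the zero set of the coding pseudometric $d_h$. The four-point condition plus geodesicity then make $(E,d_h)$ a compact $\mathbb R$-tree. Connectedness would already suffice here, since $(E,d_h)$ is a continuous image of $[0,1]$ and connected $0$-hyperbolic spaces are $\mathbb R$-trees. The compact-to-Hausdorff bijection then identifies the quotient topology with the $d_h$-topology, so Hausdorffness and upper semi-continuity come for free.

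\textbf{What each buys.} The paper's argument is self-contained and purely topological. Yours imports standard but nontrivial facts, which you assert rather than prove: the four-point condition for $d_h$ and the $0$-hyperbolic characterization of $\mathbb R$-trees. In return it gives an explicit $\mathbb R$-tree metric on $E$ for which the tree-like inequality reads exactly $d(\alpha(s),\alpha(t))\le d_h(s,t)$. So the factoring map $g$ in Lemma \ref{lem: dendrites from height functions} is $1$-Lipschitz for $d_h$. This is a converse to Lemma \ref{lem: height functions from dendrites} that the topological argument does not provide.

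\textbf{Drop the direct Step 1.} Do not keep it even as motivation. Apart from the unfinished bookkeeping, item (i) is false as stated. The function $h$ may return to the level $a-\epsilon$ inside $D_\epsilon$. In that case the class of an endpoint of $D_\epsilon$ contains interior points of $D_\epsilon$, so the interior of $D_\epsilon$ is not saturated. The correct open saturated sets are the components of $h^{-1}\big((a-\epsilon,\infty)\big)$.

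\textbf{Step 2 is redundant.} As written it relies on Step 1 for metrizability. In your main route it is superseded, since $E$ is homeomorphic to the compact metric space $(E,d_h)$.
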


    \begin{proof}
        By Theorem 10.2 in \cite{Nadler1992}, we need only show that $E$ is a continuum and that for any two points in $E$ there is a third point separating them in $E$.  If $E$ is Hausdorff, then $\pi$ is an upper semi-continuous decomposition and $E$ is a continuum.  For any $t\in[0,1]$,  $[t]_h = C\cap h^{-1}\big(h(t)\big)$ is the intersection of closed sets where $C$ is the component of  $h^{-1}\big([h(t), \infty) \big)$ containing $t$.  Thus equivalence classes are closed.  Therefore, we need only show that for every two points in $E$ there is a third point separating them.

    Suppose that $e= [r]_h$ and $e'= [s]_h$ are distinct points of $E$.    We will prove that their exists a $t\in [0,1]$ such that $e$ and $e'$ lie in distinct open components of $E\backslash \big\{\pi(t)\big\}$. We will now consider two cases.  

    \textbf{Case 1:}  \emph{The component $C_r$ of $h^{-1}\big([h(r), \infty) \big)$ containing $r$ and the component $C_s$ of $h^{-1}\big([h(s), \infty) \big)$ containing $s$ are disjoint.}  Let $[a,b]$ be the interval with endpoints in $C_r\cup C_s$ and with $(a,b)\cap (C_r\cup C_s)=\emptyset$. Fix $t\in [a,b]$ such that $h(t) = \inf h\bigl([a,b]\big)$.  Notice $h(t) < \min \big\{h(r), h(s)\big\}$.  Then the component $C_t$ of  $h^{-1}\big([h(t), \infty) \big)$ containing $t$ contains $C_r\cup C_s$ and $[r]_h$, $[s]_h$ are contained in distinct components of $C_t\backslash [t]_h$.  Then $[0,1]\backslash [t]_h = [0,1]\backslash C_t \cup  \Big(\bigcup\limits_{C\in \Lambda_t} C\Big)$ where $\Lambda_t $ is the set of components of $C_t\backslash [t]_h$.

    
    Thus  $\Big\{\pi\big([0,1]\backslash [t]_h\big), \pi (C)\mid C\in \Lambda_t\Big\}$ is a collection of disjoint open subsets of $E\backslash \pi(t)$  and   $\pi(r)$ and $\pi(s)$ are contained in distinct elements of this set.  Thus $e$ and $e'$ are contained in distinct open components of $E\backslash \pi(t)$.
    
    \textbf{Case 2:}  \emph{The component $C_r$ of $h^{-1}\big([h(r), \infty) \big)$ containing $r$ and the component $C_s$ of $h^{-1}\big([h(s), \infty) \big)$ containing $s$ are not disjoint.}  We may, without loss of generality, assume that $h(r) \leq h(s)$.  Then, since $C_r\cap C_s \neq \emptyset$, we have that $C_s \subset C_r$ and $h(r)< h(s)$.  Then $[r]_h\cap C_s = \emptyset$.  Fix $[a,b]\subset C_r$ such that $[a,b]\cap [r]_h= \{a\}$ and $[a,b]\cap C_s = \{b\}$.  Let $t\in [\frac{a+b}{2},b]$ such that $h(t) =\inf h\big([\frac{a+b}{2},b]\bigr)$.  Then $h(r)< h(t)< h(s)$.  Thus, for $C_t$ the component of $h^{-1}\big([h(t), \infty) \big)$ containing $t$, we have that $C_s\subset C_t$ and $[r]_h \subset [0,1]\backslash C_t$.  Thus $\pi\big([0,1]\backslash C_t\big)$ and $\pi\big(C_t\backslash [t]_h\big) $ give an open separation of $E\backslash \{\pi(t)\}$ and contain the elements $\pi(r)$ and $\pi (s)$ respectively.

     \end{proof}

\begin{lemma}\label{lem: dendrites from height functions}
    If $\alpha:[0,1]\to X$ is tree-like, then $\alpha$ is $\mathbb R$-tree nullhomotopic.
\end{lemma}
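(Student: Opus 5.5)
Let $h$ be a height function for $\alpha$, and let $\pi:[0,1]\to E=[0,1]/\hspace{-.25em}\sim_h$ be the quotient map. By Lemma \ref{lem: build denrite}, $E$ is a dendrite. The plan is to show that $\alpha$ is constant on each equivalence class $[t]_h$, so that it descends to a continuous map $g:E\to X$ with $g\circ\pi=\alpha$. Then we check that $\pi$ is a loop in a space carrying an $\mathbb R$-tree metric.

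\textbf{Well-definedness.} Suppose $s\sim_h t$ with $s<t$. As noted after the definition, $h(s)=h(t)=\inf_{u\in[s,t]}h(u)$. The tree-like inequality then gives
\[ d\big(\alpha(s),\alpha(t)\big)\le h(s)+h(t)-2\inf_{u\in[s,t]}h(u)=0, \]
so $\alpha(s)=\alpha(t)$. Hence there is a unique function $g:E\to X$ with $g\circ\pi=\alpha$.

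\textbf{Continuity of $g$.} Since $[0,1]$ is compact and $E$ is Hausdorff (it is a dendrite), $\pi$ is a closed continuous surjection, hence a quotient map. So $g$ is continuous precisely when $g\circ\pi=\alpha$ is, and $\alpha$ is continuous.

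\textbf{Loop and $\mathbb R$-tree structure.} The map $\pi:[0,1]\to E$ is a loop provided $\pi(0)=\pi(1)$. If $\alpha$ is required to be a loop and the height function satisfies $h(0)=h(1)=\min h$ (the normalization implicit for tree-like loops in \cite{HamblyLyons2010}), then $0\sim_h 1$ directly. If not, one can adjust: replace $\pi$ by the loop $\pi*\overline{\pi}$, or note that $\alpha$ is a loop and compose with the arc from $\pi(1)$ back to $\pi(0)$ in $E$. The main point to verify is that the definition intends the normalization $h(0)=h(1)=\inf h$. Finally, $E$ must be an $\mathbb R$-tree and not merely a dendrite. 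Every dendrite admits a compatible $\mathbb R$-tree metric, and that is the fact to cite. Alternatively, one can use the explicit pseudometric
\[ d_h(s,t)=h(s)+h(t)-2\inf_{[s,t]}h, \]
whose zero set is exactly $\sim_h$ and which induces the quotient topology by compactness.

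The only real obstacle is bookkeeping: confirming the endpoint condition and citing the metrizability of dendrites as $\mathbb R$-trees. The key inequality step is immediate.
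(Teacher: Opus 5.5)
Your core argument is the paper's: pass to the quotient $\pi:[0,1]\to E=[0,1]/\!\sim_h$, which is a dendrite by Lemma~\ref{lem: build denrite}. The tree-like inequality then shows that $s\sim_h t$ forces $\alpha(s)=\alpha(t)$, and $\alpha$ descends to a continuous $g$ by the universal property of the quotient. That part is correct.

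The one place you go beyond the paper is checking that $\pi$ is a loop. There your primary answer is right, but your fallbacks are not.

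\textbf{What works.} Assume the normalization $h\ge 0$, $h(0)=h(1)=0$. This is part of Hambly and Lyons' original definition, and the height function $h(t)=Ld'(\pi(0),\pi(t))$ of Lemma~\ref{lem: height functions from dendrites} satisfies it. Then $h(0)=h(1)=\inf_{[0,1]}h$, hence $0\sim_h 1$ and $\pi(0)=\pi(1)$. The paper's proof leaves this step implicit.

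\textbf{What fails.} Your proposed adjustments do not substitute for the normalization. You have $g\circ(\pi*\overline{\pi})=\alpha*\overline{\alpha}\neq\alpha$, and appending an arc of $E$ to $\pi$ likewise changes the path being factored.

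In fact no adjustment can work, because without the normalization the lemma is false. For any rectifiable path, $h(t)=1+\mathrm{length}(\alpha|_{[0,t]})$ is positive and nondecreasing, so $h(t)+h(s)-2\inf_{[s,t]}h=h(t)-h(s)\geq d(\alpha(s),\alpha(t))$. Hence a rectifiable simple closed curve, e.g.\ the boundary of $T$ traversed once, would be tree-like under the unnormalized definition. Yet, as used in the proof of Theorem~\ref{finalthm}, it is not $\mathbb R$-tree nullhomotopic.

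So commit to the normalized definition and drop the hedge.
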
    

    \begin{proof}
        Since $\alpha$ is tree-like, there exists a positive real-valued continuous function $h:[0,1]\to\mathbb R$  such that, for all $s\leq t$, $d\big(\alpha(t), \alpha(s)\big) \leq h(t)+h(s)-2\inf\limits_{u\in[s,t]} h(u)$.  By Lemma \ref{lem: build denrite}, we have a continuous function $\pi :[0,1]\to E$ to a dendrite induced by $\sim_h$.  Notice that if $s\sim_h t$ and $s\leq t$, then $d\big(\alpha(t), \alpha(s)\big) \leq h(t)+h(s)-2\inf\limits_{u\in[s,t]} h(u)= 0$.  Thus, by the universal property of quotient maps, there exists a continuous map $g:E \to X$ such that $g\circ \pi= \alpha$. 
    \end{proof}

\begin{lemma} [Height functions from dendrites]\label{lem: height functions from dendrites}
Let $(X,d)$ be a metric space and $(E,d')$ be a dendrite with a geodesic metric $d'$. Suppose that there exists a loop $\pi:[0,1]\to E$ and a Lipschitz map $g: E\to X$, then $g\circ\pi$ is tree-like.  In particular, $h:[0,1]\to \mathbb R$ given by $h(t) = Ld\big(\pi(0), \pi(t)\big)$, where $L$ is the Lipschitz constant for $g$, is a height function for $g\circ\pi$.
\end{lemma}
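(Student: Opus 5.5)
The plan is to verify the height-function inequality directly from the tree structure of $E$. The idea is to use the point of the geodesic $[\pi(s),\pi(t)]$ that is closest to the root $\pi(0)$. Set $p=\pi(0)$ and $h(t)=L\,d'\bigl(p,\pi(t)\bigr)$. Continuity of $h$ is immediate, since $\pi$ is continuous and $d'(p,\cdot)$ is $1$-Lipschitz. Positivity is a minor nuisance: $h(0)=0$. If strict positivity is required, I would replace $h$ by $h+1$. The defining inequality only involves the differences $h(t)+h(s)-2\inf h$, so adding a constant changes nothing.

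Fix $s\le t$ and write $x=\pi(s)$, $y=\pi(t)$. Because $g$ is $L$-Lipschitz,
\[d\bigl(g\pi(t),g\pi(s)\bigr)\le L\,d'(x,y).\]
It therefore suffices to show
\[d'(x,y)\le d'(p,x)+d'(p,y)-2\inf_{u\in[s,t]}d'\bigl(p,\pi(u)\bigr).\]
Let $c$ be the branch (median) point of the geodesic triangle $p,x,y$ in the $\mathbb R$-tree $E$. Then $[p,x]\cap[p,y]=[p,c]$ and $c\in[x,y]$, so
\[d'(x,y)=d'(p,x)+d'(p,y)-2d'(p,c).\]
The inequality thus reduces to
\[\inf_{u\in[s,t]}d'\bigl(p,\pi(u)\bigr)\le d'(p,c).\]

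For this reduction I use unique arcwise connectedness of the dendrite. The set $\pi([s,t])$ is a path-connected subset of $E$ containing $x$ and $y$. In a uniquely arcwise connected space, any path from $x$ to $y$ has image containing the arc $[x,y]$. Hence $c\in\pi([s,t])$, so $c=\pi(u_0)$ for some $u_0\in[s,t]$. Then
\[\inf_{u\in[s,t]}d'\bigl(p,\pi(u)\bigr)\le d'\bigl(p,\pi(u_0)\bigr)=d'(p,c),\]
as required. Multiplying through by $L$ gives the inequality in the definition of tree-like.

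The main point needing care is the claim that a path between two points of a dendrite covers the unique arc between them. I would justify it by citing the standard fact that the image of a path is a Peano continuum, and any Peano continuum contains an arc between any two of its points. By uniqueness, that arc must be $[x,y]$. The existence of the median point $c$ and the resulting distance formula are standard for geodesic $\mathbb R$-trees. The rest is bookkeeping, including that $\pi$ being a loop is not actually needed beyond fixing the basepoint.
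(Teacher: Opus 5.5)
Your proof is correct and follows essentially the same route as the paper: both take the branch point of $\pi(0),\pi(s),\pi(t)$, note that $\pi|_{[s,t]}$ must pass through it (the paper says this point separates $\pi(s)$ from $\pi(t)$), and then combine the $\mathbb R$-tree distance identity with the Lipschitz bound on $g$. Your replacement of $h$ by $h+1$ to get strict positivity is a small but genuine improvement, since the paper's $h$ vanishes at $0$ and the paper never addresses this.
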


\begin{proof}
    Let $\pi$, $g$, and $h$ be as stated in the lemma.  Fix $s,t\in [0,1]$ and suppose that $s\leq t$.  Let $A_s$, $A_t$ be the arc in $E$ from $\pi(0)$ to $\pi(s)$, $\pi(t)$, respectively.  Let $x$ be the unique point on $A_s\cap A_t$ that separates $\pi(s)$ and $\pi(t)$.  Since $x$ separates $\pi(s)$ and $\pi(t)$,   we have that $\inf\limits_{u\in[s,t]} h(u)\leq Ld\big(\pi(0),x\big)$.  Since $x\in A_s\cap A_t$, $d'\big(\pi(0), \pi(u)\big) = d'\big(\pi(0), x\big)+d'\big(x, \pi(u)\big)$ for $u\in \{s,t\}$.   
    Then
    
    \begin{align*}
        h(t) + h(s) - 2\inf\limits_{u\in[s,t]} h(u)&\geq h(t) + h(s) - 2Ld'\big(\pi(0), x\big) \\
        &= Ld'\big(\pi(0), \pi(t)\big)+Ld'\big(\pi(0), \pi(s)\big)- 2Ld'\big(\pi(0), x\big)\\
        &=Ld'\big(x, \pi(t)\big)+Ld'\big(x, \pi(s)\big)\\
        &\geq d\big(g(x), g\circ\pi(t)\big)+d\big(g(x), g\circ\pi(s)\big)\geq d\big(g\circ\pi(t), g\circ\pi(s)\big)
    \end{align*}
    Thus $h$ is a height function for $g\circ\pi$.
    
\end{proof}

\begin{remark}
    Since we did not require $\pi$ to be Lipschitz, Lemma \ref{lem: height functions from dendrites} does not imply $g\circ \pi$ is Lipschitz tree-like.
\end{remark}


\section{Building dendrites and maps}\label{sectionbuildingdendrites}

A \emph{metric simplicial tree} is a simplicial tree endowed with an edge metric given by an assignment of length to each edge.  Let $E$ be a metric simplicial tree with vertex set $E^{(0)}$. A continuous map $\pi:[0,1]\to E$ is \emph{simplicial} if all point preimages are finite and $\pi$ is linear on the closure of each component of $[0,1]\backslash\pi^{-1}(E^{(0)})$. A continuous map $g: E\to \mathbb R^2$ is \emph{simplicial} if all point preimages are finite and $g$ is linear on each edge of $E$.

\subsection{Parameterizing edges of a triangle}

Let $T$ be an isosceles right triangle in the Euclidean plane with vertices $\big\{a, b, c\big\}$, where the right angle is at $b$, and with given ordering of vertices $a,b,c$.  We write $\triangle abc$ for $T$ when we wish to refer to this vertex ordering. Suppose that we have simplicial trees $E$ and $\tilde E$, an interval $[r,s]$,  simplicial maps $\pi: [0,1]\to E$, $\tilde \pi: [0,1]\to \tilde E$,  $g: E\to \mathbb R^2$, and $\tilde g: \tilde E\to \mathbb R^2$  such that
\begin{enumerate}
    \item\label{ind: vertices} $\pi|_{[r,s]}$, $\tilde \pi|_{[r,s]}$ are injective with $\pi\big(\{r,\tfrac{r+2}{2}, s\}\bigr)\subset E^{(0)}$, $\tilde \pi\big(\{r,s\}\bigr)\subset \tilde E^{(0)}$

    \item $g\circ \pi(r) = \tilde g\circ \tilde \pi(r) =a$, $g\circ \pi(\frac{r+s}{2}) = b$, and  $g\circ \pi(s) = \tilde g\circ \tilde \pi(s) = c$; and
    \item $g\circ \pi|_{[r,\frac{r+s}{2}]}$, $g\circ \pi|_{[\frac{r+s}{2}, s]}$, and $\tilde g\circ \tilde \pi|_{[r,s]}$ are linear maps.

\end{enumerate}

\noindent In this situation, we will say $(\pi,\tilde \pi, g,\tilde g)$ \emph{parametrizes $\triangle abc$ on $[r,s]$}; see Figure \ref{fig:basecse}. Note that the paths $\pi,\tilde\pi$ match the ordering of $\triangle abc$. Additionally, note that it is possible that arcs $\pi\big([r,s]\big)$, $\tilde \pi\big([r,s]\big)$ contain vertices in addition to those specified by Condition (\ref{ind: vertices}).

\begin{figure}[ht]
    \centering
        {\tiny\def\svgwidth{.75\textwidth}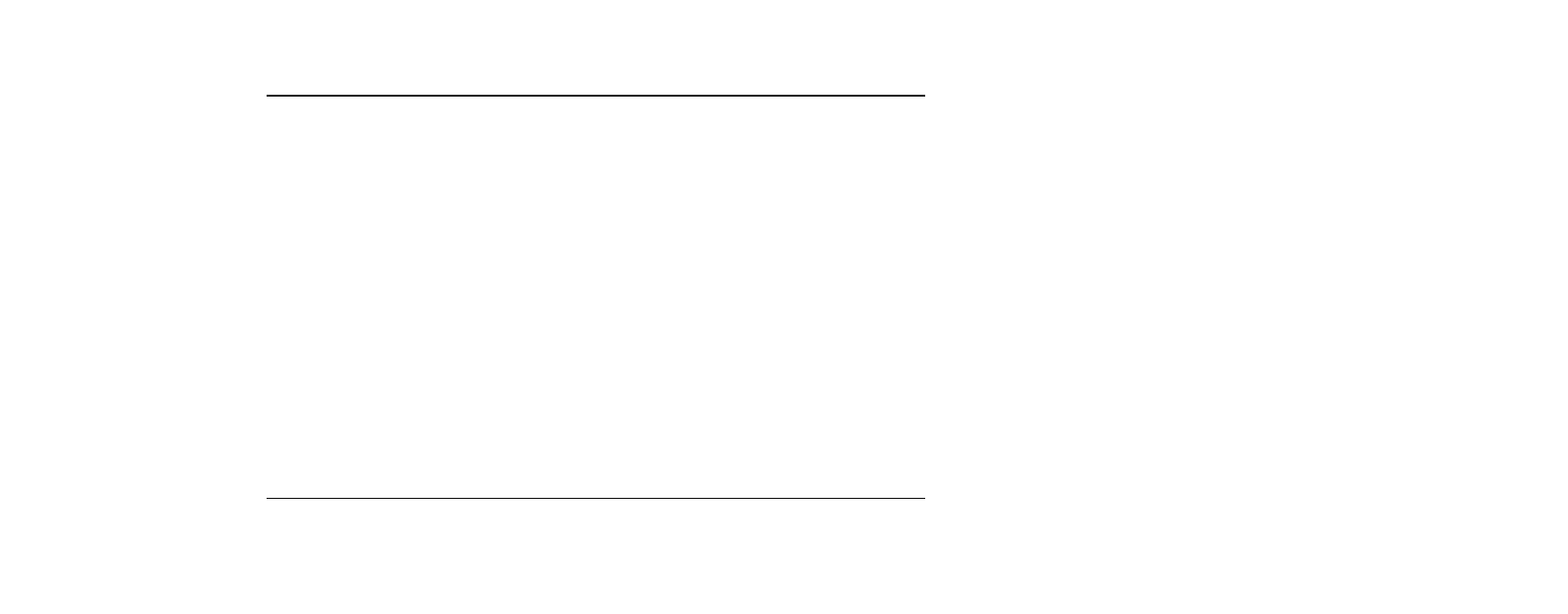}
    \caption{Parameterizing $\triangle abc$ on $[r,s]$}
    \label{fig:basecse}
\end{figure}

\subsection{The subdivision step}\label{sec: subdivision}
If $(\pi,\tilde \pi, g,\tilde g)$ parametrizes $T=\triangle abc$ on $[r,s]$, then we can construct a new simplicial tree  $E_1$  by making  $\pi(\frac{3r+s}{4})$ and $\pi(\frac{r+3s}{4})$ vertices, if they weren't already.  We will endow the subdivided edges with lengths such that the metric remains unchanged on $E$.  We will then add two new edges at these points with length half the length of the arc from $\pi(r)$ to $\pi(\frac{r+s}{2})$; see Figure \ref{fig:step1}. We construct $\tilde{E_1}$ similarly by making $\tilde \pi(\frac{r+s}{2})$ a vertex (if it wasn't already) and attaching a new edge at this (possibly new) vertex.  We will endow the subdivided edges with lengths such that the metric on $\tilde E$ remains unchanged and the new edge will have length half the length of the arc from $\tilde \pi(r)$ to $\tilde \pi(s)$.

Then we can naturally identify $E$, $\tilde E$ as a metric subspace of $E_1$, $\tilde E_1$, respectively, and define  retractions  $\rho_1: E_1\to E$, $\tilde\rho_1:\tilde E_1\to \tilde E$  which collapse the added edges to their vertex in $E$ or $\tilde E$, respectively.  Notice that $\rho, \tilde \rho$ are monotone retracts and the sup-metric distances $d^{\s}(\rho_1, id_{E_1})$ and $d^{\s}(\tilde \rho_1, id_{\tilde E_1})$ are equal to the diameter of the added edges.

\begin{figure}[ht]
    \centering
        {\tiny\def\svgwidth{\textwidth}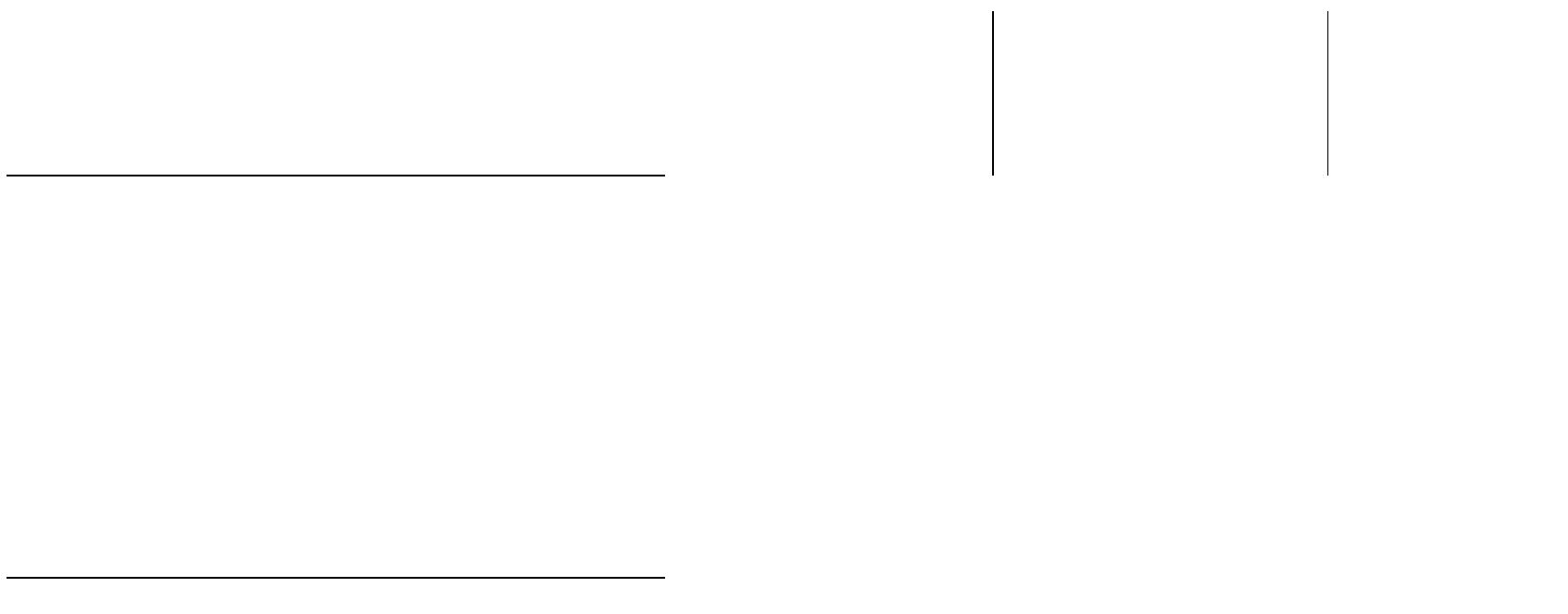}
    \caption{Adding and subdividing edges}
    \label{fig:step1}
\end{figure}

We can then define maps $\pi_1: [0,1]\to E_1$ and $\tilde \pi_1: [0,1]\to \tilde E_1$ by $\pi_1|_{[0,1]\backslash (r,s)} = \pi|_{[0,1]\backslash (r,s)}$, $\tilde \pi_1|_{[0,1]\backslash (r,s)} = \tilde \pi|_{[0,1]\backslash (r,s)}$,  and $\pi_1$, $\tilde \pi_1$ are linear maps as illustrated in Figure \ref{fig:inductive step} on $[r,s]$ where $s_i = \frac{(8-i)r + i\cdot s}{8}$.

Let $d=\frac{a+b}{2}$, $e=\frac{b+c}{2}$, and $f=\frac{a+c}{2}$ denote the midpoints of the sides of $T$. We define $g_1: E_1 \to \mathbb{R}^2$ and $\tilde g_1: \tilde E_1 \to \mathbb{R}^2$ by $g_1|_{E} = g$, $\tilde g_1|_{\tilde E} = \tilde g$, $g_1\circ \pi_1(s_2)=g_1\circ \pi_1(s_6) =f $, and $\tilde g_1\circ \tilde \pi_1(\frac{r+s}{2})= b$ and then extend linearly. Notice that $(\pi_1,\tilde \pi_1, g_1,\tilde g_1)$ parametrizes $\triangle adf$ on $[r,\frac{3r+s}{4}]$, $\triangle fdb $ on $[\frac{3r+s}{4},\frac{r+s}{2}]$, $\triangle bef$ on $[\frac{r+s}{2},\frac{r+3s}{4}]$, and $\triangle fec$ on $[\frac{r+3s}{4},s]$ where $\triangle adf$, $\triangle fdb$, $\triangle bef$, and $\triangle fec$ are vertex-ordered isosceles right triangles each with diameter a half that of $T$. The order that $\pi$ traverses the legs of these right triangles will induce an ordering on $\{\triangle adf,\triangle fdb,  \triangle bef, \triangle fec\}$, which we will refer to as the \textit{natural ordering inherited from} $\pi$.

\begin{figure}[ht]
    \centering
        {\tiny\def\svgwidth{\textwidth}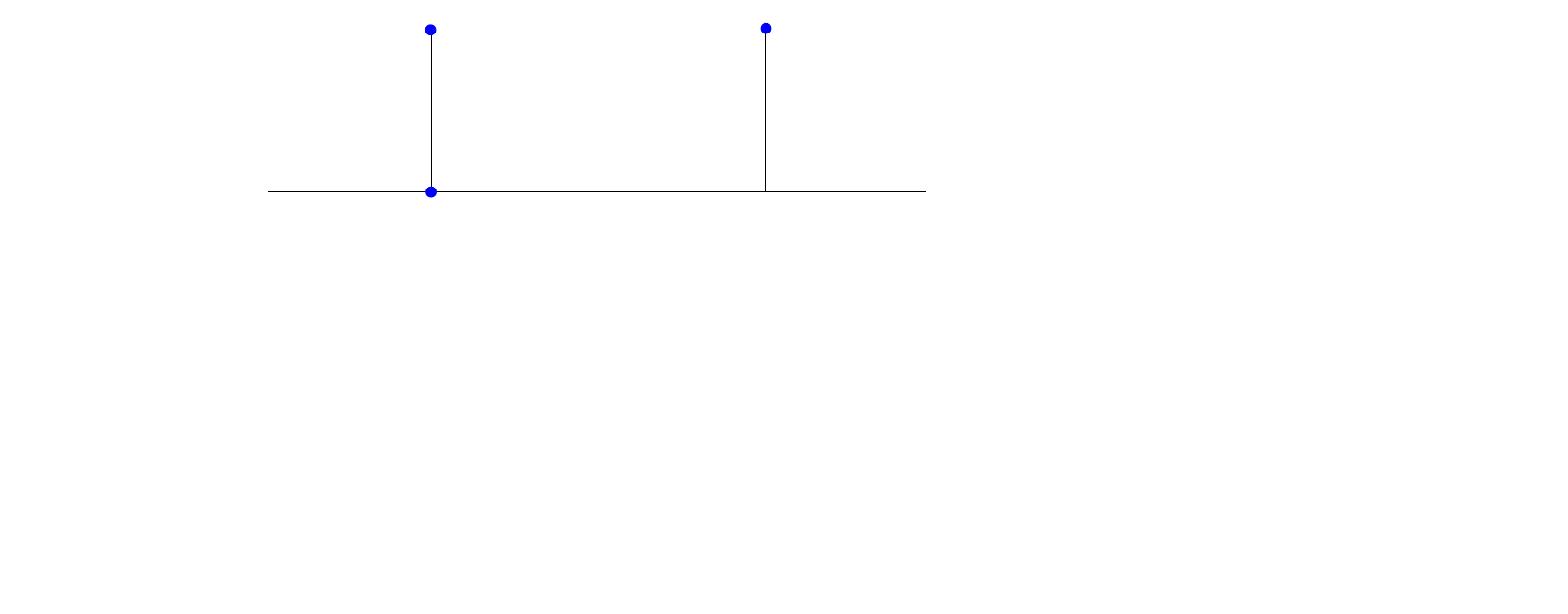}
    \caption{The maps}
    \label{fig:inductive step}
\end{figure}

\section{The construction}\label{sec: construction}
Let $A =\{T_1, \dots, T_n\}$ be an ordered collection of isosceles right triangles with given vertex orderings.  We will say that $(\pi, \tilde \pi, g, \tilde g) $ is a \emph{parametrization of $A$ (relative to $S=\{0=s_0< s_1< \cdots< s_n=1\}$)} if  $(\pi, \tilde \pi, g, \tilde g) $ parametrizes $T_i$ on $[s_{i-1},s_i]$.

In the remainder of the paper, we let $T$ denote the vertex-ordered triangle $\triangle abc$ with vertices $a=(0,0)$, $b=(0,1)$, and $c=(1,1)$. Set $A_1=\{T\}$. Let $E_1$ be the simplicial tree with three vertices $\{v_0, v_\frac12, v_1\}$ and two edges $\big\{(v_0, v_\frac12), (v_\frac12, v_1)\bigr\}$ and $\tilde E_1$ be the simplicial tree with two vertices $\{\tilde v_0, \tilde v_1\}$ and one edge $\big\{(\tilde v_0, \tilde v_1)\bigr\}$. We will endow $E_1$ with the edge metric where both edges have length $1$ and $\tilde E_1$ with the edge metric where its unique edge has length $\sqrt{2}$.

Define $\pi_1: [0,1]\to E_1$, $\tilde\pi_1: [0,1]\to \tilde E_1$ by $\pi_1(i) = v_i$ for $i\in \{0, \frac12, 1\}$, $\tilde\pi_1(i) = \tilde v_i$ for $i\in \{0, 1\}$. Define $g_1: E_1\to \mathbb{R}^2$, $\tilde g_1: \tilde E_1\to \mathbb{R}^2$ by $g_1(v_0) = \tilde g_1(\tilde v_0) = a$, $g_1(v_\frac12)  = b$, and $g_1(v_1) = \tilde g_1(\tilde v_1) = c$ and extending linearly.  Then $(\pi_1, \tilde \pi_1, g_1, \tilde g_1)$ parametrizes $A_1$ on $[0,1]$.

Suppose that we have inductively constructed maps $\pi_n: [0,1]\to E_n$, $\tilde\pi_n: [0,1]\to \tilde E_n$ and  $g_n: E_n\to \mathbb{R}^2$, $\tilde g_n: \tilde E_n\to \mathbb{R}^2$ such that the following hold:

\begin{enumerate}
    \item $E_n$, $\tilde E_n$ are simplicial trees with edge metrics where all edges have length $\frac{1}{2^{n-1}}$, $\frac{\sqrt{2}}{2^{n-1}}$, respectively.
    \item $E_n$, $\tilde E_n$ are monotone retracts of $E_{n-1}$, $\tilde E_{n-1}$, respectively.
    \item\label{collapse} For $n>1$, $\rho_n: E_{n}\to E_{n-1}$, $\tilde \rho_n:\tilde  E_{n}\to\tilde  E_{n-1}$ are monotone retractions such that $d^{\s}(\rho_n, id_{E_n})=\frac{1}{2^{n-1}}$ and $d^{\s}(\tilde \rho_n, id_{\tilde E_n})=\frac{\sqrt2}{2^{n-1}}$.
    \item $g_n: E_n\to \mathbb{R}^2$ and $\tilde g_n: \tilde E_n\to \mathbb{R}^2$ are $1$-Lipschitz.
    \item For $n>1$, $g_n|_{E_{n-1}}= g_{n-1}$ and $\tilde g_n|_{\tilde E_{n-1}}= \tilde g_{n-1}$.
    \item $A_n$ is an ordered collection of $4^{n-1}$ vertex-ordered isosceles right triangles with leg lengths $\frac{1}{2^{n-1}}$.
    \item $(\pi_n, \tilde \pi_n, g_n, \tilde g_n) $ is a parametrization of $A_n$ relative to $S=\{0, \frac{1}{4^{n-1}}, \frac{2}{4^{n-1}}, \dots, 1\}$
\end{enumerate}

We are now ready to define $A_{n+1}$, $E_{n+1}$, $\tilde E_{n+1}$, $\pi_{n+1}: [0,1]\to E_{n+1}$, $\tilde\pi_{n+1}: [0,1]\to \tilde E_{n+1}$ and $g_{n+1}: E_{n+1}\to \mathbb{R}^2$, $\tilde g_{n+1}: \tilde E_{n+1}\to \mathbb{R}^2$ that satisfy the inductive hypotheses.

Write $A_n=\{T_1, \dots, T_{4^{n-1}}\}$. Since $(\pi_n, \tilde \pi_n, g_n, \tilde g_n) $ parameterizes $T_i$ on $[\frac{i-1}{4^{n-1}},\frac{i}{4^{n-1}}]$, we can apply the subdivision process of Section \ref{sec: subdivision} for each $i$ to define $A_{n+1}$, $E_{n+1}$, $\tilde E_{n+1}$, $\pi_{n+1}: [0,1]\to E_{n+1}$, $\tilde\pi_{n+1}: [0,1]\to \tilde E_{n+1}$ and  $g_{n+1}: E_{n+1}\to \mathbb{R}^2$, $\tilde g_{n+1}: \tilde E_{n+1}\to \mathbb{R}^2$.  In this process, every edge of $E_n$ (and $\tilde E_n$) is subdivided in half and at each new vertex one or two new edges of length $\frac{1}{2^n}$ are added.  It is then an exercise to see this satisfy the inductive hypothesis. Note that the ordered set $A_{n+1}=\{T_1', \cdots, T_{4^n}'\}$  is obtained from $A_n$ by replacing each $T_i$ with the sequence $T_{4(i-1) +1}',T_{4(i-1) +2}',T_{4(i-1) +3}',T_{4(i-1) +4}'$ consisting of the four vertex-ordered triangles from the subdivision process and ordered with the natural ordering inherited from $\pi_n$.

\begin{figure}[h]
    \centering
        {\tiny\def\svgwidth{\textwidth}
\begingroup%
  \makeatletter%
  \providecommand\color[2][]{%
    \errmessage{(Inkscape) Color is used for the text in Inkscape, but the package 'color.sty' is not loaded}%
    \renewcommand\color[2][]{}%
  }%
  \providecommand\transparent[1]{%
    \errmessage{(Inkscape) Transparency is used (non-zero) for the text in Inkscape, but the package 'transparent.sty' is not loaded}%
    \renewcommand\transparent[1]{}%
  }%
  \providecommand\rotatebox[2]{#2}%
  \newcommand*\fsize{\dimexpr\f@size pt\relax}%
  \newcommand*\lineheight[1]{\fontsize{\fsize}{#1\fsize}\selectfont}%
  \ifx\svgwidth\undefined%
    \setlength{\unitlength}{769.86625671bp}%
    \ifx\svgscale\undefined%
      \relax%
    \else%
      \setlength{\unitlength}{\unitlength * \real{\svgscale}}%
    \fi%
  \else%
    \setlength{\unitlength}{\svgwidth}%
  \fi%
  \global\let\svgwidth\undefined%
  \global\let\svgscale\undefined%
  \makeatother%
  \begin{picture}(1,0.38722693)%
    \lineheight{1}%
    \setlength\tabcolsep{0pt}%
    \put(0,0){\includegraphics[width=\unitlength,page=1]{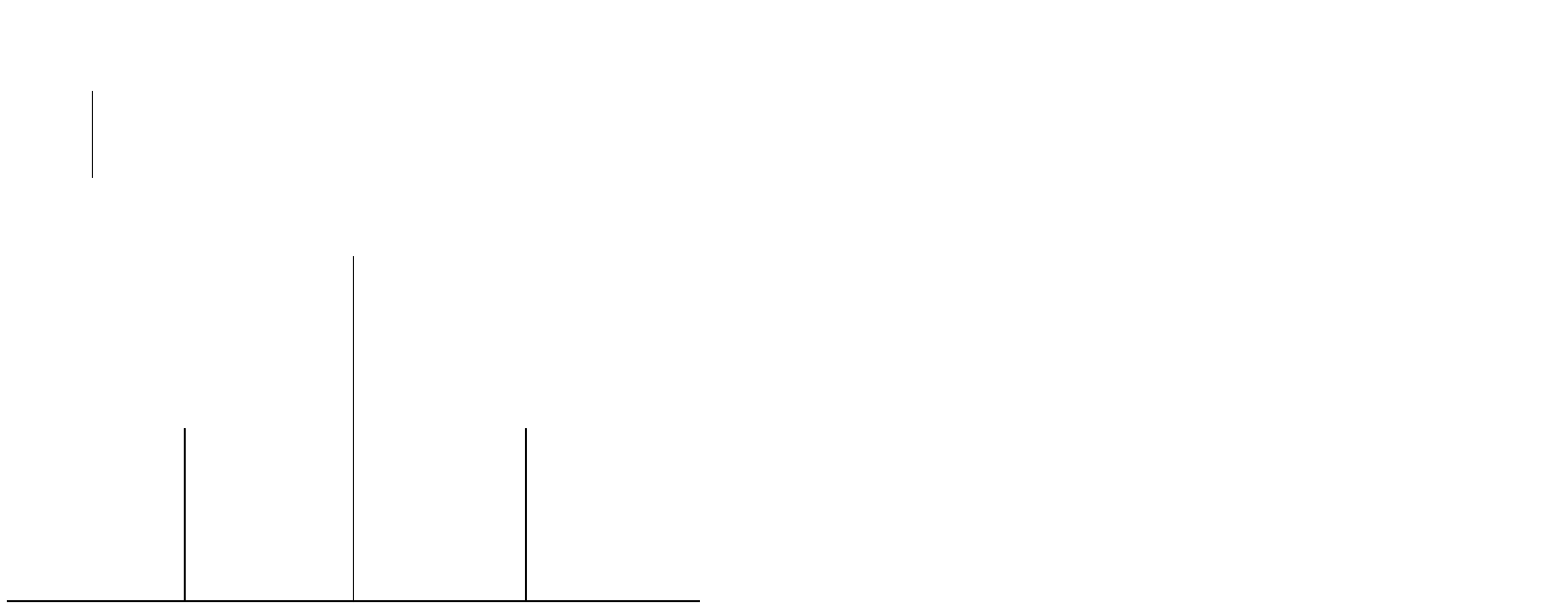}}%
    \put(0.0121819,0.09095919){\color[rgb]{0,0,0}\makebox(0,0)[lt]{\lineheight{1.25}\smash{\begin{tabular}[t]{l}$\tilde E_2$\end{tabular}}}}%
    \put(0,0){\includegraphics[width=\unitlength,page=2]{TreeSteps.pdf}}%
    \put(0.0085253,0.34822082){\color[rgb]{0,0,0}\makebox(0,0)[lt]{\lineheight{1.25}\smash{\begin{tabular}[t]{l}$E_2$\end{tabular}}}}%
    \put(0,0){\includegraphics[width=\unitlength,page=3]{TreeSteps.pdf}}%
    \put(0.53464823,0.34822082){\color[rgb]{0,0,0}\makebox(0,0)[lt]{\lineheight{1.25}\smash{\begin{tabular}[t]{l}$E_3$\end{tabular}}}}%
    \put(0,0){\includegraphics[width=\unitlength,page=4]{TreeSteps.pdf}}%
    \put(0.52067073,0.0914063){\color[rgb]{0,0,0}\makebox(0,0)[lt]{\lineheight{1.25}\smash{\begin{tabular}[t]{l}$\tilde E_3$\end{tabular}}}}%
    \put(0,0){\includegraphics[width=\unitlength,page=5]{TreeSteps.pdf}}%
  \end{picture}%
\endgroup%
}
    \caption{On the left are $E_2$ and $\tilde E_2$.  On the right are $E_3$ and $\tilde E_3$.}
    \label{fig:Tree steps}
\end{figure}

\newpage

\begin{figure}[h]
    \centering
        {\tiny\def\svgwidth{\textwidth}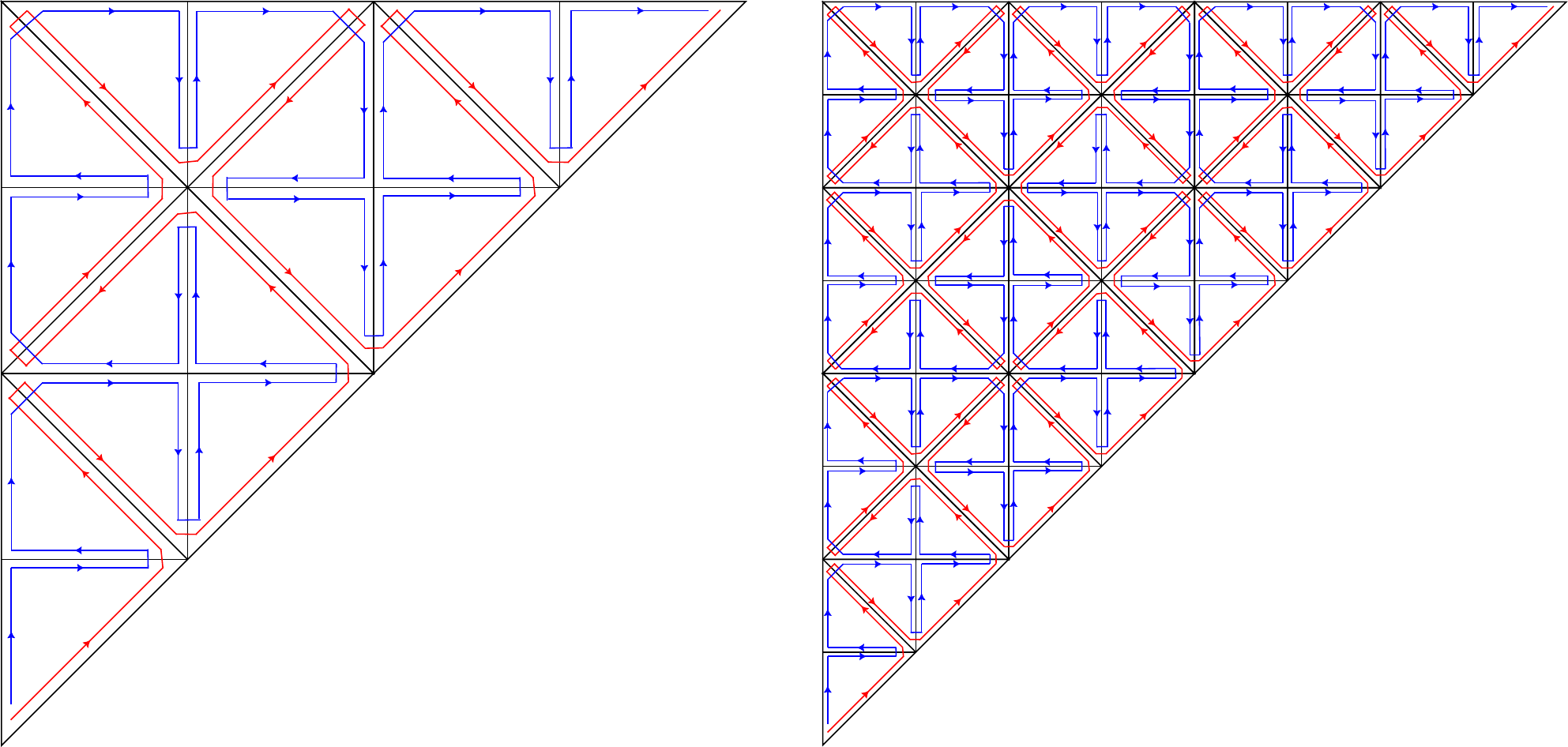}
    \caption{On the left is a schematic for $g_2\circ \pi_2$ (in blue) and $\tilde g_2\circ \tilde \pi_2$ (in red).
    On the right is a schematic for $g_3\circ \pi_3$ (in blue) and $\tilde g_3\circ \tilde \pi_3$ (in red).}
    \label{fig:Step3}
\end{figure}

\section{Limiting arguments}\label{sectionlimiting}

\begin{lemma}\label{lem: the dendrite}
Suppose that $E_i$ is a nested sequence of dendrites with geodesic metrics $d_i$ such that $d_{i}|_{E_{i-1}\times E_{i-1}} = d_{i-1}$ and suppose $a>0$. If, for all $i>1$ , $\rho_i:E_i\to E_{i-1}$ is a monotone retract and $\sup\limits_{e\in E_i}d_i\big(\rho_i(e), e\big)\leq \frac{a}{2^i}$, then $E = \varprojlim(E_i, \rho_i)$ is a dendrite and  $E$ admits a metric $d$ such that  $d|_{E_i\times E_i}= d_i$ (where we identify $E_i$ with its natural embedding into $E$) and the projections $\varrho_i: E\to E_i$ converge uniformly to the identity map on $E$.
\end{lemma}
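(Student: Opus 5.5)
We realise $E$ concretely as the metric completion of the increasing union $E_\infty=\bigcup_i E_i$. Because $d_i$ restricted to $E_{i-1}$ is $d_{i-1}$, these metrics glue to a single metric $d_\infty$ on $E_\infty$. Let $(\hat E,d)$ be its completion.

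\emph{Step 1: the retractions extend to $\hat E$.} Let $r_{j,i}=\rho_{i+1}\circ\cdots\circ\rho_j:E_j\to E_i$ for $j\ge i$. Since each $E_i$ is a dendrite with a geodesic metric, $E_{i-1}\subset E_i$ is a subtree, and the monotone retraction onto it is the nearest-point (first-point) retraction. Such a retraction is $1$-Lipschitz, so every $r_{j,i}$ is $1$-Lipschitz. Moreover, for $e\in E_j$,
\[
d\big(r_{j,i}(e),e\big)\le \sum_{k=i+1}^{j} \frac{a}{2^k}< \frac{a}{2^i},
\]
and the maps $r_{j,i}$ are compatible as $j$ increases. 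Hence they define a $1$-Lipschitz map $\varrho_i:E_\infty\to E_i$ with $d(\varrho_i(e),e)\le a/2^i$. This map extends uniquely to $\hat E$, and the bound persists. Thus $\varrho_i\to \mathrm{id}$ uniformly on $\hat E$.

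\emph{Step 2: $\hat E$ is compact.} Each $E_i$ is compact, and the compact set $E_i$ is an $a/2^i$-net in $\hat E$. So $\hat E$ is complete and totally bounded, hence compact.

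\emph{Step 3: $\hat E\cong\varprojlim(E_i,\rho_i)$.} Define $\Phi:\hat E\to\varprojlim E_i$ by $x\mapsto(\varrho_i(x))_i$. It is well defined because $\rho_i\circ\varrho_i=\varrho_{i-1}$, which holds on $E_\infty$ and hence on $\hat E$ by density. It is continuous, and it is injective because $\varrho_i(x)\to x$. For surjectivity, a thread $(e_i)$ gives a sequence with $d(e_{i-1},e_i)\le a/2^i$. This sequence is Cauchy, its limit $x$ satisfies $\varrho_i(x)=e_i$, and so $\Phi(x)=(e_i)$. A continuous bijection from a compact space to a Hausdorff space is a homeomorphism. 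Transporting $d$ along $\Phi$ gives the required metric on $E$, restricting to $d_i$ on the copy of $E_i$.

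\emph{Step 4: $E$ is a dendrite.} $E$ is the limit of an inverse system of continua with monotone bonding maps, so it is a continuum. I see two routes to showing it is a dendrite.
\begin{itemize}
\item[(a)] Invoke the classical fact that an inverse limit of dendrites with monotone retractions is a dendrite (Nadler, Chapter 10).
\item[(b)] Argue metrically. $E_\infty$ is a union of nested geodesic $\mathbb R$-trees, so it is an $\mathbb R$-tree, since 0-hyperbolicity and geodesics only involve finitely many points. The completion of an $\mathbb R$-tree is an $\mathbb R$-tree. A compact $\mathbb R$-tree is a dendrite.
\end{itemize}
Route (b) is self-contained, so I would write that.

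\emph{Main obstacle.} I expect the main obstacle to be justifying that a monotone retraction of a dendrite onto a subdendrite is the $1$-Lipschitz first-point map. This matters because it is what gives the uniform bounds. The argument: for $e\notin E_{i-1}$, the component $U$ of $E_i\setminus E_{i-1}$ containing $e$ has closure meeting $E_{i-1}$ in a single point $p$. Monotonicity and connectedness of $\overline U$ force $\rho_i(\overline U)=\{p\}$. Then one checks that the first-point map is $1$-Lipschitz using the geodesic from $e$ to $e'$, which passes through both of their first points.
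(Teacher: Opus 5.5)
Your argument is correct, but it builds $E$ from the opposite end from the paper.

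\textbf{The paper's route.} The paper works directly inside $\varprojlim(E_i,\rho_i)\subset\prod_i E_i$. It sets $d\big((e_i),(f_i)\big)=\sup_i d_i(e_i,f_i)$ and checks that this is finite. It proves by an explicit $\epsilon$-estimate that $d$ induces the inverse-limit topology. It uses the $1$-Lipschitz first-point retraction only to get $d|_{E_i\times E_i}=d_i$, and it cites Nadler (Theorem 10.36) for the dendrite conclusion.

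\textbf{Your route.} You take the completion of the direct limit $\bigcup_i E_i$ and get compactness from the $a/2^i$-net argument. You then identify the completion with the inverse limit via $\Phi$. Compatibility of the topologies therefore comes for free from the compact-to-Hausdorff principle, with no $\epsilon$-bookkeeping.

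\textbf{The two metrics agree.} Because the $\rho_i$ are $1$-Lipschitz, $d_i(e_i,f_i)$ is nondecreasing in $i$. So the paper's supremum equals $\lim_i d_i(e_i,f_i)=\lim_i d(\varrho_i x,\varrho_i y)=d(x,y)$ in your completion.

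\textbf{What route (b) buys.} It shows that $(E,d)$ is itself an $\mathbb R$-tree, and in particular that $d$ is geodesic. The lemma does not assert this and the paper never checks it. Yet it is exactly the hypothesis of Lemma~\ref{lem: height functions from dendrites} when that lemma is applied to $E$ and $\tilde E$ in the proof of Theorem~\ref{finalthm}. The price is the external fact that the completion of an $\mathbb R$-tree is an $\mathbb R$-tree. Alternatively, the four-point condition passes to the completion, and a compact length space is geodesic.

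\textbf{The first-point map.} You also justify that a monotone retraction onto a subdendrite is the first-point map, which the paper merely asserts. There the operative point is this: for $q\neq p$, the connected fibre $\rho_i^{-1}(q)$ contains $q$, so it cannot meet $U$ without also containing the cut point $p$.
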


\begin{proof}
    It follows from Theorem 10.36 in \cite{Nadler1992} that $E$ is a dendrite.  We need only show that $E$ admits a metric satisfying the two conditions of the lemma.

    For $(e_i), (f_i)\in E $, let  $d\big((e_i), (f_i)\big) = \sup_i d_i(e_i, f_i)$.  Since $\sup\limits_{e\in E_i}d_i\big(\rho_i(e), e\big)\leq \frac{a}{2^i}$,

    \begin{align*}
        d_i(e_i, f_i) \leq& \sum\limits^{i}_{j=2}d_j(e_j, e_{j-1}) + d(e_1, f_1)+ \sum\limits^{i}_{j=2} \big( d_j(f_j, f_{j-1})  \\
        \leq & \sum\limits_{j=1}^\infty \frac{a}{2^j} + \diam(E_1) + \sum\limits_{j=1}^\infty \frac{a}{2^j} = \diam(E_1)+2a,
    \end{align*} which implies that $d$ (the uniform metric on $E$ as a subset of $\prod_i E_i$) is well-defined.

     The uniform topology on a product of metric spaces is always finer than the product topology so it suffices to show that the inverse limit topology is finer than the topology induced by $d$. Let $(e_n)\in E$ and $\epsilon>0$ and consider the open $d$-ball $B_{d}((e_n),\epsilon)$ in the metric space $(E,d)$. Find $m\in\mathbb N$ such that $\sum_{j=m+1}^{\infty}\frac{a}{2^j}<\frac{\epsilon}{4}$. Let $U_n=B_{d_n}(e_n,\epsilon/4)$ for $n\in\{1,2,\dots, m\}$ and let $U_n=E_n$ for $n>m$. We claim that $E\cap \prod_{n\in\mathbb N}U_n\subseteq B_{d}((e_n),\epsilon)$. Given $(f_n)\in E\cap \prod_{n\in\mathbb N}U_n$, we have $d_n(e_n,f_n)<\frac{\epsilon}{4}$ for all $n\in\{1,2,\dots,m\}$. Fix $n>m$. Then
    \begin{align*}
        d_n(e_n,f_n) &\leq  d_n(e_n,e_m)+d_n(e_m,f_m)+d_n(f_m,f_n)\\
                    &\leq  \sum_{j=m+1}^{n}\frac{a}{2^{j}}+d_m(e_m,f_m)+\sum_{j=m+1}^{n}\frac{a}{2^{j}}\\
                    & < \frac{\epsilon}{4}+\frac{\epsilon}{4}+\frac{\epsilon}{4}\leq \frac{3\epsilon}{4}
    \end{align*}

    Since $\sup\{d_n(e_n,f_n)\mid n\in\mathbb N\}<\epsilon$, we have $(f_n)\in B_{d}((e_n),\epsilon)$. Thus the desired inclusion is proved and $d$ is a metric compatible with the inverse limit topology on $E$.

     The monotone retraction of $E_i$ onto $E_{i-1}$ simply collapses the closure of each component $C$ of $E_i\backslash E_{i-1}$ to the point $C\cap E_{i-1}$, i.e., $\rho_i$ is the closest point projection map from $E_i$ to $E_{i-1}$.  Thus an arc in $E_i$ is mapped by $\rho_i$ to an arc (possibly degenerate) in $E_{i-1}$ by collapsing some initial and some terminal part of the arc.  Thus $\rho_i$ is $1$-Lipschitz. Notice that if $(e_i), (f_i)\in E_j$ for some $j$, then $e_i = e_j$ and $f_i= f_j$ for all $i\geq j$ which implies $d\big((e_i), (f_i)\big) = d_j(e_j, f_j)$.  Hence, $d|_{E_i\times E_i}= d_i$.

     Fix $n\in\mathbb{N}$.  Then, for $(e_i)\in E$,

     \[
        d\big(\varrho_n\big((e_i)\big),(e_i)\big) =  \sup_{i>n} d_i(e_n, e_i) \leq \sum_{j=n+1}^{i}d_j(e_j,e_{j-1}) \leq  \sum_{j=n+1}^{i}\frac{a}{2^n}\leq \frac{a}{2^n}.
    \]

Thus $\varrho_n$ converges uniformly to the identity map.
\end{proof}

\begin{lemma}\label{lem: limit is r-tree homotopic}
Let $E_i$ be a nested sequence of dendrites with geodesic metrics $d_j$ such that $d_{i}|_{E_{i-1}\times E_{i-1}} = d_{i-1}$ and suppose $a>0$. Suppose that we have maps $\pi_i:[0,1]\to E_i$ and $g_i: E_i\to \mathbb R^2$ for $i\geq 1$ and maps $\rho_i:E_i\to E_{i-1}$ for $i\geq 2$ with the following properties.

    \begin{enumerate}
        \item $\rho_i:E_i\to E_{i-1}$ is a monotone retract of dendrites
        \item $\sup\limits_{e\in E_i}d_i\big(\rho_i(e), e\big)\leq \frac{a}{2^i}$
        \item\label{fixed ends}$\pi_i(r) = \pi_j(r)$ for all $i,j$ and $r\in \{0,1\}$
        \item\label{cauchy} $\sup_t d_i\big(\pi_{i-1}(t), \pi_i(t)\big)\leq \frac{a}{2^{i-1}}$.
        \item $g_i: E_i\to \mathbb R^2$ is $L$-Lipschitz.
        \item $g_i|_{E_{i-1}} = g_{i-1}$
    \end{enumerate}

    For $E = \varprojlim(E_i, \rho_i)$,  the maps $\pi_i$ converge uniformly to a map $\pi: [0,1]\to E$ such that $\pi(r) = \pi_1(r)$ for $r\in \{0,1\}$.  The maps $g_i\circ\varrho_i$ converge uniformly to an $L$-Lipschitz map $g: E \to \mathbb R^2$ such that $g|_{E_i} = g_i$.  In particular, $g\circ\pi$ is $\mathbb R$-tree homotopic to $g_1\circ \pi_1$
\end{lemma}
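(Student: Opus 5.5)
First I place everything inside $(E,d)$. By Lemma \ref{lem: the dendrite}, $E=\varprojlim(E_i,\rho_i)$ is a dendrite with a metric $d$ satisfying $d|_{E_i\times E_i}=d_i$, and the projections $\varrho_i$ converge uniformly to $\mathrm{id}_E$. Each $E_i$ is identified with its copy in $E$, so every $\pi_i$ can be viewed as a map $[0,1]\to E$. Hypothesis (\ref{cauchy}), with $\pi_{i-1}(t)\in E_{i-1}\subset E_i$, gives $\sup_t d(\pi_{i-1}(t),\pi_i(t))\le a/2^{i-1}$. Hence $(\pi_i)$ is uniformly Cauchy in $C([0,1],E)$. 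Since $E$ is compact, hence complete, the maps $\pi_i$ converge uniformly to a continuous $\pi:[0,1]\to E$. Hypothesis (\ref{fixed ends}) says $\pi_i(0),\pi_i(1)$ do not depend on $i$, so $\pi(r)=\pi_1(r)$ for $r\in\{0,1\}$.

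Next I build $g$. Let $E_\infty=\bigcup_i E_i\subset E$. Define $g_\infty$ on $E_\infty$ by $g_\infty|_{E_i}=g_i$; this is consistent by hypothesis (6). It is $L$-Lipschitz: two points of $E_\infty$ lie in a common $E_j$, where $d=d_j$ and $g_j$ is $L$-Lipschitz. Since $\varrho_i\to\mathrm{id}$ uniformly and $\varrho_i(E)=E_i$, the set $E_\infty$ is dense in $E$. So $g_\infty$ extends uniquely to an $L$-Lipschitz map $g:E\to\mathbb R^2$ with $g|_{E_i}=g_i$. Then $g_i\circ\varrho_i=g\circ\varrho_i$, and
\[ \sup_{e}\big|g_i\circ\varrho_i(e)-g(e)\big|\le L\sup_e d(\varrho_i(e),e)\to0, \]
so $g_i\circ\varrho_i\to g$ uniformly.

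Finally, for the $\mathbb R$-tree homotopy, I factor $(g\circ\pi)*\overline{(g_1\circ\pi_1)}$ through a loop in $E$. Set $\sigma=\pi*\overline{\pi_1}$, with $\pi_1$ viewed in $E_1\subset E$. It is a well-defined loop because the endpoints agree: $\pi(0)=\pi_1(0)$ and $\pi(1)=\pi_1(1)$. We have $g\circ\pi_1=g_1\circ\pi_1$ since $g|_{E_1}=g_1$, so $g\circ\sigma=(g\circ\pi)*\overline{(g_1\circ\pi_1)}$. The space $E$ is a dendrite, hence carries an $\mathbb R$-tree metric (the given $d$ may not be geodesic, but Definition \ref{defn: R-tree nullhomotopic} needs only the topology). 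So $g\circ\pi\den g_1\circ\pi_1$.

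The main obstacle is the bookkeeping that $d$ really restricts to each $d_i$ and that the inclusions $E_{i-1}\subset E_i\subset E$ are compatible, so the Cauchy estimate transfers to $E$. Lemma \ref{lem: the dendrite} supplies this. The one extra point to check is density of $E_\infty$, which follows from $\varrho_i\to\mathrm{id}$.
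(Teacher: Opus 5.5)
Your proof is correct. The treatment of $\pi$ matches the paper's, and so does the final $\mathbb R$-tree homotopy: you compose the loop $\pi*\overline{\pi_1}$ in $E$ with $g$ and use $g|_{E_1}=g_1$. You do, however, construct $g$ by a different route.

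\emph{The paper's route.} It shows directly that $(g_k\circ\varrho_k)$ is uniformly Cauchy, via the telescoping estimate
\[
\big|g_k(e_k)-g_j(e_j)\big|\le\sum_{i=j+1}^{k} L\,d_k(e_i,e_{i-1})\le \frac{La}{2^j},
\]
which uses hypotheses (2) and (6). It then recovers $g|_{E_k}=g_k$ and the $L$-Lipschitz bound as separate facts about the limit. The first comes from $g_\ell\circ\varrho_\ell|_{E_k}=g_k$ for $\ell\ge k$; the second from $\varrho_i$ being $1$-Lipschitz.

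\emph{Your route.} You define $g$ first on $\bigcup_i E_i$. There, consistency and the $L$-Lipschitz bound are immediate because $d=d_j$ on each $E_j$. You then extend to $E$ by density and the standard Lipschitz extension into the complete space $\mathbb R^2$. Uniform convergence follows in one line from $g_i\circ\varrho_i=g\circ\varrho_i$ and $\varrho_i\to\mathrm{id}_E$, which Lemma \ref{lem: the dendrite} already supplies.

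\emph{Comparison.} In your version, $g|_{E_i}=g_i$ and the Lipschitz constant hold by construction, and you avoid both the telescoping computation and any appeal to $\varrho_i$ being $1$-Lipschitz. The cost is needing density of $\bigcup_i E_i$ and the extension theorem. The paper's version is a self-contained Cauchy argument that never has to single out a dense subset.
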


\begin{proof}
    By Lemma \ref{lem: the dendrite}, $E = \varprojlim(E_i, \rho_i)$ admits a metric $d$ such that  $d|_{E_i\times E_i}= d_i$ (where we identify $E_i$ with its natural embedding in $E$) and the projections $\varrho_i: E\to E_i$ converge uniformly to the identity map on $E$. By (\ref{cauchy}), the maps $\pi_i$ form a Cauchy sequence (in the sup metric) of continuous maps $[0,1]\to E$.  Thus they converge uniformly to a map $\pi:[0,1]\to E$ and (\ref{fixed ends}) implies that $\pi(r)= \pi_1(r)$ for $r\in \{0,1\}$.

    Let $e= (e_i)\in E$.  Then for $k>j$, we have

    \begin{align*}
        d\big(g_k\circ\varrho_k(e), g_j\circ\varrho_j(e)\big) &=d\big(g_k(e_k), g_j(e_j)\big)\\
                    &=d\big(g_k(e_k), g_k(e_j)\big)\leq  \sum\limits_{i=j+1}^{k}d\big(g_k(e_i), g_k(e_{i-1})\big)\\
                    &\leq\sum\limits_{i=j+1}^{k}L\cdot d_k\big(e_i, e_{i-1})\leq\sum\limits_{i=j+1}^{k}L\frac{a}{2^i} \leq L\frac{a}{2^j}.
    \end{align*}

    Thus $g_k\circ\varrho_k$ is a Cauchy sequence of maps $E\to \mathbb{R}^2$ (in the sup metric) and therefore converge to a map $g: E\to \mathbb{R}^2$.  Notice that $g_k\circ\varrho_k|_{E_k} = g_k$ for all $k$.  Hence for all $\ell\geq k$, $g_\ell\circ\varrho_\ell|_{E_k} = g_k$.  Thus $g|_{E_k} = g_k$ which implies that $g\circ \pi_1 = g_1\circ \pi_1$.  Since $E$ is a dendrite, $\pi_1(0)= \pi(0)$, and $\pi_1(1) = \pi(1)$, the path $g\circ \pi_1$ is $\mathbb R$-tree homotopic to $g\circ \pi$. Hence $g_1\circ \pi_1$ is $\mathbb R$-tree homotopic to $g\circ \pi$.

    Since $\varrho_i$ is $1$-Lipschitz and $g_i$ is $L$-Lipschitz, $g_i\circ \varrho_i$ is $L$-Lipschitz which implies that $g$ is $L$-Lipschitz.

\end{proof}

\section{Non-transitivity of tree-like equivalent}\label{sec: the bow on top}

\begin{theorem}\label{finalthm}
    There exists paths $\alpha$, $\beta$, and $\gamma$ in $\mathbb{R}^2$ such that $\alpha$ is tree-like equivalent to $\gamma$, $\beta$ is tree-like equivalent to $\gamma$, but $\alpha$ is not tree-like equivalent to $\beta$.  In addition, $\alpha$ and $\beta$ can be chosen to be tree-like reduced paths.
\end{theorem}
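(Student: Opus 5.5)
We take $\alpha=g_1\circ\pi_1$, the path $a\to b\to c$ along the two legs of $T$; $\beta=\tilde g_1\circ\tilde\pi_1$, the straight segment $a\to c$ along the hypotenuse; and $\gamma=g\circ\pi$, the limit path supplied by Lemma \ref{lem: limit is r-tree homotopic} applied to the sequence $(E_n,\pi_n,g_n,\rho_n)$ of Section \ref{sec: construction}.

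\textbf{Step 1: the hypotheses of Lemma \ref{lem: limit is r-tree homotopic} hold for both sequences.} Properties (1)--(6) of that lemma come from the inductive hypotheses of Section \ref{sec: construction}. The retractions $\rho_n$ are monotone, they move points by at most $\frac{1}{2^{n-1}}$ (resp.\ $\frac{\sqrt2}{2^{n-1}}$), the maps $g_n,\tilde g_n$ are $1$-Lipschitz and compatible, and the endpoints $\pi_n(0),\pi_n(1)$ are fixed vertices. The Cauchy estimate $\sup_t d_{n+1}(\pi_n(t),\pi_{n+1}(t))\le C2^{-n}$ needs a short check. On each interval $[\frac{i-1}{4^{n-1}},\frac{i}{4^{n-1}}]$, $\pi_{n+1}$ differs from $\pi_n$ only by excursions into the newly added edges, which have length $2^{-n}$, together with a linear reparametrization inside an arc of length at most $2\cdot 2^{-(n-1)}$. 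A suitable constant $a$ therefore works. The lemma then gives dendrites $E,\tilde E$, maps $\pi,\tilde\pi$, and $1$-Lipschitz maps $g,\tilde g$ with $g\circ\pi\den g_1\circ\pi_1$ and $\tilde g\circ\tilde\pi\den\tilde g_1\circ\tilde\pi_1$.

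\textbf{Step 2: the two limits agree, and the relations are tree-like.} On each triangle $T_i\in A_n$, both $g_n\circ\pi_n$ and $\tilde g_n\circ\tilde\pi_n$ map $[\frac{i-1}{4^{n-1}},\frac{i}{4^{n-1}}]$ into $T_i$, with the same endpoint values. Since $\diam T_i\le \sqrt2\, 2^{-(n-1)}$, the two paths are within $\sqrt2\,2^{-(n-1)}$ of each other in sup norm, so $g\circ\pi=\tilde g\circ\tilde\pi=:\gamma$. For the tree-like conclusion, we form the dendrite $E\cup_{\pi(0)\sim\pi(1)} E$, glued at the endpoints of the relevant arc. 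Concretely, $\alpha*\overline\gamma$ factors through the loop $\pi_1*\overline{\pi}$ in $E$ itself, since $\pi_1$ and $\pi$ share endpoints in $E$ and the map is $g$. Because $g$ is Lipschitz and $E$ carries the geodesic metric built in Lemma \ref{lem: the dendrite}, Lemma \ref{lem: height functions from dendrites} yields a height function. Hence $\alpha$ is tree-like equivalent to $\gamma$, and likewise $\beta$ is tree-like equivalent to $\gamma$ via $\tilde E$.

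\textbf{Step 3: $\alpha$ is not tree-like equivalent to $\beta$.} The loop $\alpha*\overline\beta$ is the boundary of the triangle $T$, traversed once as an embedded simple closed curve. Suppose it were tree-like. By Lemma \ref{lem: dendrites from height functions} it would factor as $G\circ P$ through a dendrite $D$. The image $P([0,1])$ is a Peano subcontinuum of a dendrite, hence contractible. Consequently $G\circ P$ would be nullhomotopic in the circle $\partial T$ (homotope $P$ to a constant inside $P([0,1])$ and compose with $G$). But $\alpha*\overline\beta$ is a degree-one loop in $\partial T$, a contradiction. An alternative argument uses unique path lifting in the universal cover of the circle.

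\textbf{Step 4: reducedness.} The paths $\alpha$ and $\beta$ are injective, rectifiable arcs. An injective path admits no nontrivial tree-like cancellation. Indeed, if $\alpha\den\alpha'$ with $\alpha'$ shorter, then $\alpha*\overline{\alpha'}$ factoring through a tree forces the image of $\alpha'$ to contain the arc image of $\alpha$, by the same separation and arc-uniqueness reasoning applied to the tree. So $\alpha$ and $\beta$ are tree reduced. Two distinct reduced paths are then each related to the common $\gamma$, which answers the uniqueness question.

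\textbf{Main obstacle.} I expect the main obstacle to be Step 1, specifically verifying the Cauchy bound (4) and the sup-metric control of $\rho_n$ from the combinatorial subdivision. Everything else follows from the earlier lemmas.
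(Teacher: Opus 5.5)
Your proposal is correct and follows the paper's proof essentially step for step. You use the same $\alpha$, $\beta$, $\gamma$; Lemma~\ref{lem: limit is r-tree homotopic} for the limits; the $\diam T_i$ bound to get $g\circ\pi=\tilde g\circ\tilde\pi$; Lemma~\ref{lem: height functions from dendrites} applied to the loop $\pi_1*\overline\pi$ in $E$; and Lemma~\ref{lem: dendrites from height functions} for the simple closed curve $\alpha*\overline\beta$. Your Cauchy estimate, the contractibility-of-dendrites argument, and the length argument for reducedness supply details the paper leaves implicit. For the identification of the two limits, you should state the step $g_n\circ\pi_n=g\circ\pi_n\to g\circ\pi$. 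The remark about the glued space $E\cup_{\pi(0)\sim\pi(1)}E$ is a stray blemish: it is not what you actually use, since the loop $\pi_1*\overline\pi$ already lies in $E$ itself.
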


\begin{proof}
Let $\pi_n: [0,1]\to E_n$, $\tilde\pi_n: [0,1]\to \tilde E_n$,  $g_n: E_n\to \mathbb{R}^2$, and $\tilde g_n: \tilde E_n\to \mathbb{R}^2$ be the maps and simplicial trees constructed in Section \ref{sec: construction}.

Notice that $g_1\circ\pi_1$ parametrizes the two legs of the triangle $T=\triangle abc$ and $\tilde g_1\circ\tilde \pi_1$ parameterizes the hypotenuse. Condition (\ref{collapse}) of the inductive hypothesis guarantees that $\sup\limits_{e\in E_i}d_i\big(\rho_i(e), e\big)\leq \frac{1}{2^{i-1}}$ and $\sup\limits_{e\in\tilde  E_i}d_i\big(\tilde \rho_i(e), e\big)\leq \frac{\sqrt 2}{2^{i-1}}$. Lemma \ref{lem: the dendrite} gives us two dendrites $E = \varprojlim(E_i, \rho_i)$ and $\tilde E = \varprojlim(\tilde E_i, \tilde \rho_i)$ with metrics $d$ and $\tilde d$ such that $d|_{E_i\times E_i}= d_i$, $d|_{\tilde  E_i\times\tilde  E_i}=\tilde  d_i$, such that the projections $\varrho_i: E\to E_i$, $\tilde \varrho_i: \tilde E\to \tilde E_i$ converge uniformly to the respective identity map. Then Lemma \ref{lem: limit is r-tree homotopic} gives us maps $\pi:[0,1]\to E$, $\tilde \pi:[0,1]\to \tilde E$, $g: E\to \mathbb{R}^2$, and  $\tilde g: \tilde E\to \mathbb{R}^2$ such that $g\circ\pi$ is $\mathbb R$-tree homotopic to $g_1\circ \pi_1$ and $\tilde g\circ\tilde \pi$ is $\mathbb R$-tree homotopic to $\tilde g_1\circ \tilde \pi_1$.

Let $d^{\s}$ denote the uniform metric on paths in $\mathbb{R}^2$. Since $(\pi_n, \tilde \pi_n, g_n, \tilde g_n) $ is a parametrization of $A_n$ relative to $S=\{0, \frac{1}{4^{n-1}}, \frac{2}{4^{n-1}}, \dots, 1\}$ and the triangles of $A_n$ have diameter $\frac{\sqrt{2}}{2^{n-1}}$, we have that $d^{\s}\big(g_n\circ\pi_n,\tilde g_n\circ\tilde \pi_n \big)\leq \frac{\sqrt2}{2^{n-1}}$.

Then $g_k\circ \pi_k = g\circ \pi_k$, since $g|_{E_k} = g_k$. So $d^{\s}(g_k\circ \pi_k, g\circ \pi) = d^{\s}(g\circ \pi_k, g\circ \pi)$.  Since $\pi_k$ converges uniformly to $\pi$ and $g$ is $1$-Lipschitz, $g_k\circ \pi_k$ converges uniformly to $g\circ \pi$.  Similarly, $\tilde g_k\circ \tilde \pi_k$ converges uniformly to $\tilde g\circ\tilde  \pi$. Since $d^{\s}\big(g_n\circ\pi_n,\tilde g_n\circ\tilde \pi_n \big)\leq \frac{\sqrt2}{2^{n-1}}$, we have that $g\circ \pi= \tilde g\circ\tilde  \pi$.

Let $\alpha = g_1\circ\pi_1= g\circ\pi_1$, $\beta = \tilde g_1\circ\tilde \pi_1= \tilde g\circ\tilde \pi_1$, and $\gamma = g\circ \pi=\tilde g\circ\tilde  \pi$. Then we can consider $\pi_1*\overline \pi:[0,1]\to E$, $\tilde \pi_1*\overline {\tilde \pi}:[0,1]\to \tilde E$, $g: E\to \mathbb{R}^2$, and  $\tilde g: \tilde E\to \mathbb{R}^2$.  Since $g$ and $\tilde g$ are 1-Lipschitz, we have that $\alpha*\overline\gamma = g\circ (\pi_1*\overline \pi)$ and $\beta*\overline\gamma = \tilde g\circ (\tilde \pi_1*\overline{\tilde  \pi})$ are both tree-like by Lemma  \ref{lem: height functions from dendrites}. Since $\alpha*\overline\beta$ is a simply closed curve it is not $\mathbb R$-tree nullhomotopic and, hence, by Lemma \ref{lem: dendrites from height functions}, it is not tree-like.  Note that $\alpha$ and $\beta$ parameterize arcs, so they are reduced paths.  This completes the proof.




\end{proof}

\begin{corollary}[Theorem 1.4 of \cite{BrazasConnerFabelKent_preprint}]
If $X$ is an arbitrary topological space and $\phi: [0,1]\to X$ and $\theta:[0,1]\to X$ are path-homotopic, then there exists a path $\psi:[0,1]\to X$ such that $\phi$ is $\mathbb R$-tree homotopic to $\psi$ and $\theta$ is $\mathbb R$-tree homotopic to $\psi$.
\end{corollary}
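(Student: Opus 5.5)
The plan is to reduce to a universal model: a path-homotopy between $\phi$ and $\theta$ is a map $H:[0,1]^2\to X$, and any two paths in $X$ obtained by composing $H$ with paths in the square are $\mathbb R$-tree homotopic in $X$ whenever they are $\mathbb R$-tree homotopic in the square. This holds because if $\sigma*\overline{\tau}=G\circ p$ with $p$ a loop in an $\mathbb R$-tree $E$ and $G:E\to[0,1]^2$, then $(H\circ\sigma)*\overline{(H\circ\tau)}=(H\circ G)\circ p$. So it suffices to produce a single path $\Gamma$ in the square that is $\mathbb R$-tree homotopic both to the bottom edge path and to the path that runs up the left side, across the top, and down the right side. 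Here the bottom edge maps to $\phi$, the top edge maps to $\theta$, and the vertical sides map to constant paths.

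To get $\Gamma$, I would reuse the construction of Sections \ref{sec: construction}--\ref{sectionlimiting}. The proof of Theorem \ref{finalthm} already gives a path $\gamma=g\circ\pi=\tilde g\circ\tilde\pi$ in the triangle $T$ with $\alpha\den\gamma$ and $\beta\den\gamma$. Here $\alpha$ runs along the two legs $a\to b\to c$ and $\beta$ runs along the hypotenuse $a\to c$. The factorizations $\alpha*\overline\gamma=g\circ(\pi_1*\overline\pi)$ and $\beta*\overline\gamma=\tilde g\circ(\tilde\pi_1*\overline{\tilde\pi})$ both land in $T$, because every $g_n,\tilde g_n$ maps into $T$ and the limit maps $g,\tilde g$ do as well. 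So in $T$ itself, the legs path and the hypotenuse path are both $\mathbb R$-tree homotopic to $\gamma$.

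Next, choose a continuous surjection $q:T\to[0,1]^2$ with the following properties: $q$ maps the hypotenuse linearly onto the bottom edge, maps the leg $ab$ onto the left side followed by the first half of the top, and maps $bc$ onto the second half of the top followed by the right side. For instance, take the homeomorphism from the triangle to the square that sends $a,c$ to the bottom corners and $b$ to the midpoint of the top edge, with the two legs each bent at the top corners. Set $\Gamma=q\circ\gamma$. By the reduction above, $q\circ\beta\den\Gamma$ and $q\circ\alpha\den\Gamma$. Up to reparametrization, $q\circ\beta$ is the bottom edge and $q\circ\alpha$ is the left--top--right path. Now take $\psi=H\circ q\circ\gamma$.

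It remains to check that $H\circ q\circ\beta$ and $H\circ q\circ\alpha$ are $\mathbb R$-tree homotopic to $\phi$ and $\theta$ respectively, and then combine. This is the step I expect to need the most care, because $\den$ is not transitive. I would avoid invoking transitivity. Instead, I would build the parametrizations so that $H\circ q\circ\beta=\phi$ exactly, which is immediate since $q$ restricted to the hypotenuse is a parametrization of the bottom edge. For $\theta$, the path $H\circ q\circ\alpha$ is $\theta$ preceded and followed by constant paths. These can be absorbed directly into the tree factorization: concatenating the loop $\pi_1*\overline\pi$ with constant segments still gives a loop in $E$. The same works if a reparametrization of $\theta$ is needed, since precomposing $\pi_1$ by a monotone reparametrization keeps it a path in $E$ with the same endpoints. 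Composing with the maps $H\circ q\circ g$ and $H\circ q\circ\tilde g$ then gives the required factorizations through $E$ and $\tilde E$.
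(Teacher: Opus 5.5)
Your reduction (compose a factorization in the square with $H$) and your treatment of $\phi$ are correct. Note that where you say the $g_n$ land in $T$, you need $\conv(T)$. With $q$ linear on the hypotenuse, $H\circ q\circ\beta=\phi$ exactly, so $\phi*\overline\psi=(H\circ q\circ\tilde g)\circ(\tilde\pi_1*\overline{\tilde\pi})$.

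\textbf{The gap is the last step for $\theta$.} Your $q$ carries an initial segment $[0,t_0]$ of the legs path onto the left side of the square, and a final segment $[t_1,1]$ onto the right side. So $H\circ q\circ\alpha$ is $\theta$ \emph{padded} with constants, and what you need is to remove that padding, not add to it. None of your proposed fixes does this:
\begin{itemize}
\item Concatenating $\pi_1*\overline\pi$ with constant segments only inserts more constant pieces into the image path.
\item A reparametrization of $[0,1]$ fixing $0$ and $1$ leaves $H\circ q\circ\alpha$ constant near $0$.
\item Precomposing $\pi_1$ with a map onto $[t_0,t_1]$ gives a path from $\pi_1(t_0)$ to $\pi_1(t_1)$, which no longer closes up with $\overline\pi$.
\end{itemize}
In fact this cannot be done inside $E$ with $G=H\circ q\circ g$ at all. $G$ is constant on the nondegenerate arc $\pi_1([0,t_0])$. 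Also, $\pi(0)$ is an endpoint of $E$: new edges are only attached at images of interior points of the subintervals, and $\pi_n(t)\neq\pi_n(0)$ for $t>0$. Hence any path $p$ in $E$ starting at $\pi(0)$ with $G\circ p=\theta$ forces $\theta(s)=\theta(0)$ for arbitrarily small $s>0$. That fails for, e.g., an injective $\theta$ in $\mathbb R^2$. This is exactly the non-transitivity you flagged, and the argument as written does not establish $\theta\den\psi$.

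\textbf{How the paper avoids this.} It maps in the other direction. Because $H$ is constant on both vertical sides, it factors through the quotient of $[0,1]^2$ that collapses each vertical side to a point. That quotient is a disk, which can be identified with $\conv(T)$ so that $[(t,0)]\mapsto\beta(t)$ and $[(t,1)]\mapsto\alpha(t)$. This gives $h:\conv(T)\to X$ with $h\circ\beta=\phi$ and $h\circ\alpha=\theta$ on the nose. Then $\psi=h\circ\gamma$ works immediately via the factorizations $h\circ g$ and $h\circ\tilde g$.

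\textbf{Repair if you keep your $q$.} Change the tree instead: collapse the disjoint arcs $\pi_1([0,t_0])$ and $\pi_1([t_1,1])$ of $E$ to points. The quotient is again a dendrite, being a monotone image of $E$, and $G$ factors through it because it is constant on each arc. In the quotient, the suitably reparametrized middle piece $\pi_1|_{[t_0,t_1]}$ runs from the image of $\pi(0)$ to the image of $\pi(1)$ and is carried exactly onto $\theta$. So it closes up with the image of $\overline\pi$.
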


\begin{proof}
The construction in the proof of Theorem \ref{finalthm} gives path $\alpha:[0,1]\to T$ parameterizing the legs of $T$, path $\beta:[0,1]\to T$ parameterizing the hypotenuse of $T$, and path $\gamma:[0,1]\to \conv(T)$ in the convex hull of $T$ such that $\alpha\simeq_{\mathbb{R}}\gamma$ and $\beta\simeq_{\mathbb{R}}\gamma$ in $\conv(T)$. Since $\phi$ is path-homotopic to $\theta$, there exists a map $h:\conv(T)\to X$ such that $h\circ \alpha=\phi$ and $h\circ \beta=\theta$. Setting $\psi=h\circ\gamma$, we have $\phi\simeq_{\mathbb{R}}\psi$ and $\theta\simeq_{\mathbb{R}}\psi$.
\end{proof}

	\bibliographystyle{plain}
	\bibliography{bib}

\end{document}